\documentclass[oneside,english]{amsart}

\usepackage{amssymb}
\usepackage{amscd}

\numberwithin{equation}{section} 
\numberwithin{figure}{section} 
\textwidth=15cm \textheight=20cm
\oddsidemargin=5mm \evensidemargin=5mm \topmargin=-10mm
\theoremstyle{plain}
\newtheorem*{thm*}{Theorem}
\theoremstyle{plain}
\newtheorem{thm}{Theorem}[section]
\theoremstyle{definition}
\newtheorem{defn}[thm]{Definition}
\theoremstyle{plain}
\newtheorem{lem}[thm]{Lemma}
\theoremstyle{plain}

\theoremstyle{plain}

\theoremstyle{remark}

\theoremstyle{remark}
\newtheorem*{acknowledgement*}{Acknowledgement}

\begin{document}

\title[Invariant volume forms and first integrals]{Invariant volume forms and first integrals for geodesically equivalent Finsler metrics}

\author[Bucataru]{Ioan Bucataru}
\address{Faculty of  Mathematics \\ Alexandru Ioan Cuza University \\ Ia\c si, 
  Romania}
\email{bucataru@uaic.ro}
\urladdr{http://www.math.uaic.ro/\textasciitilde{}bucataru/}

\date{\today}

\begin{abstract}
Two geodesically (projectively) equivalent Finsler metrics determine a set of invariant volume forms on the projective sphere bundle. Their proportionality factors are geodesically invariant functions and hence they are first integrals. Being $0$-homogeneous functions, the first integrals are common for the entire projective class. In Theorem 1.1 we provide a practical and easy way of computing these first integrals as the coefficients of a characteristic polynomial.
\end{abstract}

\subjclass[2000]{53C60, 53B40, 53D25, 53A20}

\keywords{geodesically equivalent Finsler metrics, sphere bundle, Reeb vector field, volume forms, first integral}

\maketitle

\section{Introduction}

Finsler geometry has been characterised by Chern as being ``just Riemannian geometry without the quadratic restriction'' \cite{Chern96}. However, many important results from Riemannian geometry cannot be easily extended to the Finslerian framework without using techniques and tools that are specific to Finsler geometry, \cite{BC20}.

In this work we will extend the results of Matveev and Topalov from
\cite{MT98}, regarding the existence of first integrals for
geodesically equivalent metrics, from the Riemannian to the
Finslerian settings.  

It has been shown by Matveev and Topalov that two geodesically
equivalent Riemannian metrics, on an $n$-dimensional manifold,
determine a set of $n$ first integrals, \cite[Theorem 1]{MT98}.  In this work, we propose an extension of this result to the Finslerian context. For two projectively related Finsler metrics, their Hilbert $2$-forms are geodesically invariant. We use these Hilbert forms to construct a set of $n$ volume forms on the projective sphere bundle $SM$, which are invariant by the unitary geodesic vector field (the Reeb vector field for an induced contact structure on $SM$). Therefore the proportionality factors of these volume forms provide $n-1$ geodesically invariant functions. These $n-1$ functions are $0$-homogeneous in the fibre coordinates and hence they are common first integrals for all Finsler metrics in the same projective class. The missing, nth first integral, is the energy function, which is not $0$-homogeneous and cannot be obtained using this technique. 

We consider $M$ a smooth manifold, of dimension $n\geq 2$ and $TM$ its
tangent bundle. A continuous, positive, $1$-homogeneous  (in the fiber
coordinates) function $F:TM\to [0, +\infty)$, and smooth on $T_0M=TM\setminus \{0\}$, defines a Finsler structure if the metric tensor
$$
g_{ij}(x,y)=\frac{1}{2}\frac{\partial^2F^2}{\partial y^i \partial y^j}(x,y) 
$$
is non-degenerate on $T_0M$. Here $(x,y)\in TM$, with $x\in M$ and
$y\in T_xM$.

In \cite[Theorem 1]{MT98}, Matveev and Topalov use the
characteristic polynomial of a $(1,1)$-type tensor, constructed with
two geodesically equivalent Riemannian metrics, to generate $n$ first integrals that are quadratic in velocities. Two alternative ways for obtaining quadratic first integrals for projectively equivalent Riemannian metrics have been proposed by Crampin in \cite{Crampin03}. In our work we will use the angular metric to construct a characteristic polynomial, \eqref{cpq}, whose coefficients are first integrals. 

The metric tensor $g_{ij}$ of the Finsler structure $F$ can be expressed in terms of the angular metric $h_{ij}$ as follows:
\begin{eqnarray}
g_{ij}=h_{ij}+\frac{\partial F}{\partial y^i} \frac{\partial F}{\partial
  y^j}=h_{ij}+F_{y^i}F_{y^j}, \quad h_{ij}=F\frac{\partial^2
  F}{\partial y^i\partial y^j}=FF_{y^iy^j}. \label{gh}
\end{eqnarray}
The metric tensor $g_{ij}$ has rank $n$ if and only if the angular
metric $h_{ij}$ has rank $n-1$, \cite[Proposition
16.2]{Matsumoto86}. The angular metric plays an important role in
projective Finsler geometry, \cite{BM12}. 

Consider $F$ and $\widetilde{F}$ two Finsler metrics on the same
manifold $M$ of dimension $n\geq 2$. The Finsler metrics $F$ and $\widetilde{F}$ are geodesically equivalent, or projectively related, if they have the same geodesics (considered as oriented unparameterised curves). 

For two projectively related Finsler metrics $F$ and
$\widetilde{F}$ consider the characteristic polynomial:
\begin{eqnarray}
Q(\Lambda)  = \det \left( \mathcal{H}^i_j + \Lambda \delta^i_j
  \right) = \sum_{\alpha=1}^n f_{\alpha}\Lambda^{\alpha}. \label{cpq}
\end{eqnarray}
The $(1,1)$-type tensor, of rank $n-1$, $\mathcal{H}^i_j$ is determined by the angular metric $\widetilde{h}_{kj}$ of the Finsler structure $\widetilde{F}$, the contravariant metric tensor $g^{ik}$ of the Finsler structure $F$ and the two Finsler metrics $F$ and $\widetilde{F}$:
\begin{eqnarray}
\mathcal{H}^i_j =\dfrac{F}{\widetilde{F}} g^{ik}\widetilde{h}_{kj} = \dfrac{F}{\widetilde{F}} g^{ik}\left(
  \widetilde{g}_{kj} - \frac{\partial \widetilde{F}}{\partial y^k}
  \frac{\partial \widetilde{F}}{\partial y^j} \right) = \dfrac{F}{\widetilde{F}} g^{ik} \left( \widetilde{g}_{kj} - \dfrac{1}{\widetilde{F}^2} \widetilde{g}_{ks}y^k \widetilde{g}_{jl}y^l \right). \label{Hij}
\end{eqnarray}

We formulate now the main theorem of our work.

\begin{thm} \label{mainthm}
  Consider $F$ and $\widetilde{F}$ two projectively related Finsler
  metrics. The coefficients $f_{\alpha}$, $\alpha\in \{1,...,n-1\}$, of the characteristic polynomial \eqref{cpq}, 
  are first integrals for the geodesic spray $S$ of the Finsler metric $F$, which means that $S(f_\alpha)=0$.
\end{thm}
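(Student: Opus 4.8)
The plan is to recover the coefficients $f_\alpha$ as the proportionality factors of a one‑parameter family of volume forms on the projective sphere bundle $SM$ that are invariant under the geodesic spray. Write $\omega:=F_{y^i}\,dx^i$ and $\widetilde\omega:=\widetilde F_{y^i}\,dx^i$ for the two Hilbert $1$‑forms. Because $F$ and $\widetilde F$ are $1$‑homogeneous, $\omega$ and $\widetilde\omega$ are $0$‑homogeneous and vanish on the Liouville field, hence they descend to $1$‑forms on $SM$; each is a contact form on the $(2n-1)$‑dimensional $SM$, and the Reeb vector field of $\omega$ is the restriction to $\{F=1\}\cong SM$ of the geodesic spray $S$, so $i_S\omega=1$ and $i_S\,d\omega=0$. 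Since $\mathcal H^i_j$, and with it each $f_\alpha$, is $0$‑homogeneous in $y$, it suffices to prove $S(f_\alpha)=0$ on $SM$.

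The first step is to use projective equivalence to deduce that $i_S\,d\widetilde\omega=0$ as well. Indeed, $F$ and $\widetilde F$ induce the same oriented geodesic direction field on $SM$, so the Reeb field $\widetilde S$ of $\widetilde\omega$ is a positive multiple of $S$, say $\widetilde S=\rho\,S$; hence $i_S\,d\widetilde\omega=\rho^{-1}\,i_{\widetilde S}\,d\widetilde\omega=0$. (Equivalently one may invoke $\widetilde S=S-2PC$ on $T_0M$, with $P$ the projective factor and $C$ the Liouville field, which shows that $S$ and $\widetilde S$ have the same horizontal part.) Therefore $\mathcal L_S\omega=0$, $\mathcal L_S\,d\omega=0$ and $\mathcal L_S\,d\widetilde\omega=d(i_S\,d\widetilde\omega)=0$, so the $(2n-1)$‑forms
\[
\Omega_\alpha:=\omega\wedge(d\omega)^{\,n-1-\alpha}\wedge(d\widetilde\omega)^{\,\alpha},\qquad \alpha=0,1,\dots,n-1,
\]
all satisfy $\mathcal L_S\Omega_\alpha=0$ by the Leibniz rule. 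As $\Omega_0=\omega\wedge(d\omega)^{n-1}$ is a nowhere‑vanishing volume form, we may write $\Omega_\alpha=c_\alpha\,\Omega_0$, and then $0=\mathcal L_S\Omega_\alpha=S(c_\alpha)\,\Omega_0$ gives $S(c_\alpha)=0$: the factors $c_\alpha$ are first integrals, and so is every coefficient of the generating polynomial $\omega\wedge(d\omega+\Lambda\,d\widetilde\omega)^{n-1}=\bigl(\sum_{\alpha=0}^{n-1}\binom{n-1}{\alpha}c_\alpha\,\Lambda^{\alpha}\bigr)\Omega_0$, up to the nonzero constants $\binom{n-1}{\alpha}$.

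The remaining, and most computational, step is to identify this polynomial with $Q(\Lambda)$. From $d\omega=\tfrac1F h_{ij}\,dy^j\wedge dx^i+(\text{horizontal terms})$ and $d\widetilde\omega=\tfrac1{\widetilde F}\widetilde h_{ij}\,dy^j\wedge dx^i+(\text{horizontal terms})$, a count of the fibre differentials shows that only the mixed parts contribute to a top form on $SM$, so $\omega\wedge(d\omega+\Lambda\,d\widetilde\omega)^{n-1}$ reduces to $\omega$ wedged with the $(n-1)$st exterior power of $\bigl(\tfrac1F h_{ij}+\tfrac{\Lambda}{\widetilde F}\widetilde h_{ij}\bigr)dy^j\wedge dx^i$. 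A standard multilinear identity then evaluates the ratio with $\Omega_0$ as $\det{}'\!\bigl(I+\Lambda\tfrac F{\widetilde F}h^{-1}\widetilde h\bigr)=\prod_{a=1}^{n-1}(1+\Lambda\mu_a)$, where $\det{}'$ and the $\mu_a$ are taken on a complement of the line $\mathbb R y$, the common kernel of $h$ and $\widetilde h$ (since $h_{ij}y^j=\widetilde h_{ij}y^j=0$ by homogeneity). Finally, from $F_{y^i}=g_{ij}y^j/F$ one checks that $g$ and $h$ agree on the $g$‑orthogonal complement $y^{\perp_g}$, which is exactly the image of $\mathcal H^i_j=\tfrac F{\widetilde F}g^{ik}\widetilde h_{kj}$ and a complement of $\ker\mathcal H=\mathbb R y$; hence the eigenvalues of $\mathcal H$ are $0$ together with the $\mu_a$, and $\det(\mathcal H^i_j+\Lambda\delta^i_j)=\Lambda\prod_a(\Lambda+\mu_a)$. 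Comparing coefficients in $\sum_{\alpha}\binom{n-1}{\alpha}c_\alpha\Lambda^\alpha=\prod_a(1+\Lambda\mu_a)=\Lambda^{n}Q(1/\Lambda)=\sum_{\beta}f_\beta\Lambda^{\,n-\beta}$ yields $f_{n-\alpha}=\binom{n-1}{\alpha}c_\alpha$, so $S(f_\alpha)=0$ for every $\alpha$, in particular for $\alpha\in\{1,\dots,n-1\}$.

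I expect the structural part — constructing the $S$‑invariant forms $\Omega_\alpha$ and reading off the first integrals — to be short; the main obstacle should be the multilinear bookkeeping of the last paragraph: verifying that only the mixed vertical–horizontal parts of $d\omega$ and $d\widetilde\omega$ enter the top forms, carrying out the determinant identity with the correct normalisation, and accounting for the use of $g^{ik}$ (rather than an inverse of the degenerate $h$) in the definition of $\mathcal H^i_j$.
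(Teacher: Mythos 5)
Your proposal is correct, and its overall architecture coincides with the paper's: form the $(2n-1)$-forms $\omega\wedge(d\omega)^{n-1-\alpha}\wedge(d\widetilde\omega)^{\alpha}$ on $SM$, show each factor is Lie-invariant under the normalized geodesic spray (your $i_S\,d\widetilde\omega=0$ is exactly the Rapcs\'ak equation $(R_2)$, and the invariance statements are the paper's Lemma 4.1), and conclude that the proportionality factors against the contact volume $\omega\wedge(d\omega)^{n-1}$ are first integrals. Where you genuinely diverge is in identifying those factors with the coefficients of \eqref{cpq}: the paper does this by a direct combinatorial expansion (Lemma 3.1, via Marcus's determinant-of-a-sum formula and a permutation manipulation that trades one factor of $g_{ij}=h_{ij}+F_{y^i}F_{y^j}$ for the rank-one piece), whereas you block-diagonalize along $\mathbb{R}y\oplus y^{\perp_g}$, use that $g$ and $h$ agree on $y^{\perp_g}$ and that $\mathbb{R}y$ is the common kernel of $h$, $\widetilde h$ and $\mathcal H$, and match everything through the generalized eigenvalues $\mu_a$ of the pencil, obtaining $f_{n-\alpha}=\binom{n-1}{\alpha}c_\alpha$. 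Your route is shorter and more conceptual; the paper's computation has the compensating advantage of producing the explicit closed formula \eqref{deltak} for each coefficient, which it then reuses to derive the expressions \eqref{io} and \eqref{in1}. Two small points to polish: the relation $\widetilde S=\rho S$ between the two Reeb fields holds only after descending to $SM$ (on $T_0M$ one has $\widetilde S=S-2P\mathcal{C}$, as your parenthetical correctly records, and the clean statement is $i_S\,dd_J\widetilde F=0$); and your ``count of fibre differentials'' (any horizontal--horizontal factor forces at least $n+1$ copies of $dx$) is indeed valid and replaces the paper's use of $(R_4)$ and formula \eqref{ddjf} to kill the skew horizontal part of the Hilbert $2$-forms.
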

For $\alpha=n$ we have that $f_{n}=1$. Therefore, the coefficients of the polynomial \eqref{cpq} give $n-1$ non-trivial first integrals. In order to prove this, we will show first that the $0$-homogeneous functions $f_{\alpha}$ represent proportionality factors for some  volume forms on the projective sphere bundle $SM$. The key point for the proof of Theorem \ref{mainthm} is that these volume forms are invariant by the unitary geodesic spray $S/F$, which is the Reeb vector field of the contact manifold $(SM, F_{y^i}dx^i)$. The idea  of using invariant differential $2$-forms to obtain invariant functions has been used before by Tabachnikov in \cite{T99}. A modern reformulation of Tabachnikov's results has been used recently by \'Alvarez-Paiva to prove some  rigidity results for geodesically equivalent Finsler metrics, \cite[Theorem VI]{AP21}.

For $\alpha=1$, the first integral $f_1$ can be expressed as:
\begin{eqnarray}
f_1=\frac{F^{n+1}}{\widetilde{F}^{n+1}}\frac{\det \widetilde{g}}{\det
  g}. \label{io}
  \end{eqnarray}
An equivalent expression for the first integral $f_1$ has been obtained recently in \cite[Lemma 3.1]{BCC21} using different techniques. In formula \eqref{ri0}, we present an equivalent expression of this first integral, which in the Riemannian case it corresponds to the Painlev\'e first integral $I_0$, \cite[Remark 1]{TM03}.

Another first integral can be obtained for $\alpha=n-1$:
\begin{eqnarray}
f_{n-1}=\operatorname{Tr}\left(\mathcal{H}^i_j\right) = \dfrac{F}{\widetilde{F}}g^{ij} \widetilde{h}_{ij} =  \dfrac{F}{\widetilde{F}^3}g^{ij}\left(\widetilde{g}_{ij} \widetilde{F}^2 - \widetilde{g}_{ik}y^k \widetilde{g}_{jl}y^l \right).  \label{in1} 
\end{eqnarray} 
An equivalent expression of this first integral is given by formula \eqref{ri1}, which in the Riemannian case corresponds to the first integral, $I_1$, obtained by Topalov and Matveev, \cite[Theorem 1]{TM03}.

In dimension $2$, Foulon and Ruggiero have shown the existence of a first integral for the geodesic flow of a $k$-basic Finsler surface, \cite{FR16}. This result was extended to arbitrary dimension in \cite{BCC21}, by providing a class of Finsler manifolds that admit a first integral. 

A different approach for obtaining first integrals in Finsler geometry is due to Sarlet, who provides in \cite{Sarlet07} a recursive scheme of first integrals of the geodesic flow of a Finsler manifold. Using Sarlet's approach, for two projectively related Finsler metrics, the tensor 
\begin{eqnarray*}
K^i_j=\left(\dfrac{\det \widetilde{g}}{\det g}\right)^{\frac{1}{n+1}}\widetilde{g}^{ik}g_{kj}   
\end{eqnarray*}
satisfies \cite[(3)]{Sarlet07}, which means that it is a special conformal Killing tensor. The first integral generated by \cite[Theorem 3]{Sarlet07}, using this tensor $K$, is an equivalent expression of the first integral $f_1$, see formula \eqref{ri0}.

\section{Projectively equivalent Finsler metrics}

Consider $M$ a smooth manifold, of dimension $n\geq 2$, $TM$ its tangent bundle and $T_0M=TM\setminus\{0\}$ the tangent bundle with the zero section removed. We denote the local coordinates on the base manifold $M$ by $(x^i)$ and by $(x^i, y^i)$ the induced canonical
coordinates on $TM$ (and $T_0M$).

The tangent bundle $TM$ carries two canonical structures: the Liouville (dilation) vector field, ${\mathcal  C}=y^i\frac{\partial}{\partial y^i}$, and the tangent structure  (vertical endomorphism),
$J=\frac{\partial}{\partial y^i}\otimes dx^i$.   

A Finsler structure is defined by a continuous function $F:TM\to [0,
+\infty)$, smooth on $T_0M$, which satisfies the following
two assumptions:
\begin{itemize}
\item[$F_1$)] $F$ is positively $1$-homogeneous ($1^+$-homogeneous): $F(x,\lambda y)=\lambda F(x,y)$, $\forall
    \lambda >0, \forall (x,y)\in T_0M$;
  \item[$F_2$)] the Hessian of $F^2$
    $$g_{ij}(x,y)=\frac{1}{2}\frac{\partial^2F^2}{\partial y^i
      \partial y^j}(x,y) \ \textrm{is \ regular}. $$ 
  \end{itemize}
A pair $(M,F)$ is called a Finsler manifold. On a Finsler manifold we
identify the projective sphere bundle $SM=T_0M/{\mathbb{R}_+}$ with the indicatrix bundle
$IM=\{(x,y)\in TM, F(x,y)=1\}$. We note that functions defined on
$T_0M$ that are invariant under positive rescaling (are
$0^+$-homogeneous) can be restricted to $SM$. Some other geometric structures on $T_0M$ can also be restricted to $SM$. For example, a $0^+$-homogeneous form $\omega \in \Lambda^k(T_0M)$ can be restricted to $SM$ if and only if $i_{\mathcal{C}}\omega=0$.

In this work we will use the Fr\"olicher-Nijenhuis formalism to describe the geometric setting on a Finsler manifold, \cite[Chapter 2]{GM00}. For a vector valued $k$-form $K$, we will denote by $d_K$ the corresponding Lie derivation of degree $k$. When $K$ is a vector field, the corresponding derivation of degree $0$ is the standard Lie derivation $\mathcal{L}_K$.

For a Finsler metric $F$, its Hilbert $1$ and $2$-forms: 
\begin{eqnarray*}
d_JF=\frac{\partial F}{\partial y^i}dx^i, \quad
  dd_JF=\frac{1}{2}\left(\frac{\partial^2 F}{\partial y^i\partial
  x^j} - \frac{\partial^2 F}{\partial y^j\partial x^i}\right)
  dx^i\wedge dx^j +
  \frac{1}{F}h_{ij} dy^i\wedge dx^j  \label{hilbert12}
\end{eqnarray*}
are $0^+$-homogeneous. Moreover, $i_{\mathcal{C}}d_JF=0$ and $i_{ \mathcal{C}}dd_JF=0$ and therefore we can restrict these Hilbert forms to $SM$, $d_JF\in \Lambda^1(SM)$, $dd_JF\in \Lambda^2(SM)$.

The regularity condition $F_2$, for the metric tensor of a Finsler structure, ensures that there is a unique vector field $S\in \mathfrak{X}(T_0M)$ that satisfies the Euler-Lagrange equations:
\begin{eqnarray*}
{\mathcal L}_Sd_JF^2 - dF^2=0. \label{dsf2}
\end{eqnarray*}
The vector field $S$ is $2^+$-homogeneous and it is called the
geodesic vector field (spray) of the Finsler metric $F$. In local coordinates, the geodesic spray is given by:
\begin{eqnarray*}
S=y^i\frac{\partial}{\partial x^i} - 2G^i \frac{\partial}{\partial y^i},
\end{eqnarray*}
where $G^i$ are $2^+$-homogeneous functions, locally defined, on $T_0M$. 

For a Finsler metric $F$, we use the geometric framework induced by its geodesic spray $S$. We consider the horizontal and the vertical distributions on $T_0M$ determined by the horizontal and vertical projectors, \cite{Grifone72}: 
\begin{eqnarray*}
h=\frac{1}{2}\left(\operatorname{Id} - [S, J]\right), \quad v=\frac{1}{2}\left(\operatorname{Id} + [S, J]\right). \label{hv}
\end{eqnarray*}   
Locally, the two projectors $h$ and $v$ have the following expressions:
\begin{eqnarray*}
h=\frac{\delta}{\delta x^i}\otimes dx^i, \quad v=\frac{\partial}{\partial y^i}\otimes \delta y^i, \quad \textrm{ where: \  } \frac{\delta}{\delta x^i} = \frac{\partial}{\partial x^i} - \frac{\partial G^j}{\partial y^i}\frac{\partial}{\partial y^j}, \quad \delta y^i = dy^i + \frac{\partial G^i}{\partial y^j}dx^j.
\end{eqnarray*} 
\begin{defn}
Two Finsler metrics $F$ and $\widetilde{F}$ are geodesically
equivalent (projectively related) if their geodesics coincide as
unparameterised oriented curves. 
\end{defn}
Two Finsler structures $F$ and $\widetilde{F}$ are projectively
related if and only if either one of the following equivalent
Rapcs\'ak equations are satisfied:
\begin{itemize}
\item[($R_1$)] ${\mathcal L}_Sd_J\widetilde{F}=d\widetilde{F}$;
\item[($R_2$)] $i_Sdd_J\widetilde{F}=0$;
\item[($R_3$)] ${\mathcal L}_Sdd_J\widetilde{F}=0$;
\item[($R_4$)] $d_hd_J\widetilde{F}=0$. 
  \end{itemize}
For an extensive set of Rapcs\'ak equations, we refer to  \cite[Theorem 9.2.22]{SLK14}.
The third Rapcs\'ak equation $(R_3)$ ensures that the Hilbert $2$-form
$dd_J\widetilde{F}$ (of the Finsler metric $\widetilde{F}$) is
geodesically invariant (with respect to the Finsler metric $F$).

The fourth Rapcs\'ak equation $(R_4)$ ensures that the Hilbert $2$-forms
$dd_JF$ and $dd_J\widetilde{F}$ have a special form with respect to the horizontal and vertical distributions induced by the Finsler metric $F$:
\begin{eqnarray}
dd_JF=\frac{1}{F}h_{ij} \delta y^i \wedge dx^j, \quad dd_J\widetilde{F}=\frac{1}{\widetilde{F}}\widetilde{h}_{ij} \delta y^i \wedge dx^j. \label{ddjf}
\end{eqnarray}
According to this formula, the Hilbert $2$-forms $dd_JF$ and $dd_J\widetilde{F}$ vanish whenever their arguments are either both horizontal vector fields or both vertical vector fields.  

\section{Invariant volume forms on the projective sphere bundle $SM$}

We have seen already that for a Finsler metric $F$, its Hilbert $1$-form $d_JF$ is $0^+$-homogeneous and hence it can be restricted to the projective sphere bundle $SM$. This $1$-form is a contact structure on the $(2n-1)$-dimensional manifold $SM$, which means that $d_JF \wedge
\left(dd_JF\right)^{(n-1)} \neq 0$. Therefore, the contact manifold $(SM, d_JF)$ has a canonical volume form:    
\begin{eqnarray}
\Omega_{F}=\frac{(-1)^{(n-1)(n-2)/2}}{(n-1)!} d_JF\wedge \left(dd_JF\right)^{(n-1)} \in \Lambda^{2n-1}(SM). \label{osm}
\end{eqnarray}  
For an introduction to contact structures and induced volume forms we refer to \cite[\S 10.1]{KLR07}. For the contact manifold $(SM, d_JF)$ one can easily see that the corresponding Reeb vector field is the normalized geodesic spray $S/F$, since it satisfies: 
\begin{eqnarray*}
d_JF\left(\dfrac{S}{F}\right)=1, \ i_{S/F}dd_JF=\dfrac{1}{F}i_Sdd_JF=\dfrac{1}{F}\left(\mathcal{L}_Sd_JF - di_Sd_JF\right) =0.
\end{eqnarray*}

Consider now $F$ and $\widetilde{F}$ two projectively related Finsler metrics and their Hilbert forms: $d_JF\in \Lambda^1(SM)$, $dd_JF\in \Lambda^2(SM)$ and $dd_J\widetilde{F} \in \Lambda^2(SM)$. 
Using these ingredients, we can define the following $(2n-1)-$forms, for each $\alpha\in \{1,...,n\}$:
\begin{eqnarray}
\Omega_\alpha=\frac{(-1)^{(n-1)(n-2)/2}}{(\alpha-1)! (n-\alpha)!}d_JF\wedge
  \left(dd_J F\right)^{(\alpha-1)} \wedge
  \left(dd_J \widetilde{F}\right)^{(n-\alpha)} \in \Lambda^{2n-1}(SM). \label{omegak} 
\end{eqnarray}
In the next section, we will prove that the volume forms \eqref{omegak} are invariant by the Reeb vector field $S/F$. 

Now, we express the proportionality factors between $\Omega_\alpha$ and $\Omega_{F}=\Omega_n$ using two characteristic polynomials, which we construct with the help of the angular metrics $h_{ij}$ and $\widetilde{h}_{ij}$ of the two Finsler metrics $F$ and $\widetilde{F}$:   
\begin{eqnarray*}
P(\Lambda) & = & \det \left( \dfrac{F}{\widetilde{F}}\widetilde{h}_{ij} + \Lambda g_{ij}  \right) = \sum_{\alpha=1}^{n}
  \delta_{\alpha} \Lambda^{\alpha}, \label{pl} \\
  Q(\Lambda) & = &  \det \left( \mathcal{H}^i_j + \Lambda \delta^i_j
  \right) =\frac{1}{\det g}  P\left(\Lambda\right) = \sum_{\alpha=1}^n f_{\alpha}\Lambda^{\alpha}.
\end{eqnarray*}
The $(1,1)$-type tensor, of rank $n-1$, $\mathcal{H}^i_j$ is
given by  
\begin{eqnarray*}
\mathcal{H}^i_j =\dfrac{F}{\widetilde{F}} g^{ik}\widetilde{h}_{kj} = \dfrac{F}{\widetilde{F}} g^{ik}\left(
  \widetilde{g}_{kj} - \frac{\partial \widetilde{F}}{\partial y^k}
  \frac{\partial \widetilde{F}}{\partial y^j} \right).
\end{eqnarray*}
We note that the coefficients $\delta_{\alpha}$ of the polynomial $P$ are not globally defined functions on $T_0M$ (or $SM$), under a change of coordinates they obey the same transformation law as $\det g$. However, the quotient
$f_{\alpha} = \delta_{\alpha}/\det g$, the coefficients of the polynomial $Q$, are globally defined functions on $T_0M$, and being $0^+$-homogeneous functions, they are globally defined on $SM$, for each $\alpha\in \{1,2,...,n\}$. 

The two polynomials $P$ and $Q$ have no free terms, $P(0)=\det(\frac{F}{\widetilde{F}}\widetilde{h}_{ij})=0$ and $Q(0)=\det(\mathcal{H}^i_j)=0$. In the next lemma, we provide an explicit formula for the coefficients $\delta_{\alpha}$ of the polynomial $P$.

\begin{lem} \label{lemdeltak} The coefficients $\delta_{\alpha}$ of the polynomial $P$ are $0^+$-homogeneous functions, given by
\begin{eqnarray} \label{deltak}
\delta_{\alpha} & = & \dfrac{F^{n-\alpha}}{\widetilde{F}^{n-\alpha}} \frac{1}{(\alpha-1)! (n-\alpha)!}\sum_{\sigma_1, \sigma_2 \in S_n} \varepsilon(\sigma_1\sigma_2)  h_{\sigma_1(1)\sigma_2(1)}
               \cdots  h_{\sigma_1(\alpha-1) \sigma_2(\alpha-1)} \\
  &  & \cdot  
                 \widetilde{h}_{\sigma_1(\alpha)\sigma_2(\alpha)} \cdots
                 \widetilde{h}_{\sigma_1(n-1)\sigma_2(n-1)} \frac{\partial
            F}{\partial y^{\sigma_1(n)}} \frac{\partial F}{\partial
            y^{\sigma_2(n)}}. \nonumber
  \end{eqnarray}
\end{lem}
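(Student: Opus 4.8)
The plan is to compute the coefficients $\delta_{\alpha}$ directly, by expanding the determinant that defines $P(\Lambda)$ through multilinearity in the rows and discarding the irrelevant terms with the help of the homogeneity properties of $F$ and $\widetilde{F}$. First I would insert the decomposition \eqref{gh}, $g_{ij}=h_{ij}+F_{y^i}F_{y^j}$, so that
\[
P(\Lambda)=\det\left(\frac{F}{\widetilde{F}}\widetilde{h}_{ij}+\Lambda\, h_{ij}+\Lambda\, F_{y^i}F_{y^j}\right),
\]
and the $i$-th row of the matrix inside the determinant appears as a sum of three rows: the $i$-th row of $\frac{F}{\widetilde{F}}\widetilde{h}$, the $i$-th row of $\Lambda h$, and the $i$-th row of the rank-one matrix with entries $\Lambda F_{y^i}F_{y^j}$. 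Expanding the determinant by multilinearity in the rows yields a sum over all assignments of a ``row source'' to each index $1,\dots,n$, and one keeps track of the power of $\Lambda$ produced by each term, namely the number of rows taken from $\Lambda h$ plus the number of rows taken from the rank-one matrix.

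The reduction to formula \eqref{deltak} rests on two vanishing observations. Since $F_{y^i}$ and $\widetilde{F}_{y^i}$ are $0^{+}$-homogeneous, Euler's theorem gives $h_{ij}y^{j}=F\,F_{y^iy^j}y^{j}=0$ and $\widetilde{h}_{ij}y^{j}=0$; consequently any term in which no row is taken from the rank-one matrix is the determinant of a matrix every row of which annihilates the vector $y\neq 0$, hence it is zero. In particular the term of $\Lambda$-degree zero vanishes, which recovers $P(0)=0$. On the other hand, every row of the rank-one matrix is a scalar multiple of $(F_{y^{1}},\dots,F_{y^{n}})$, so any term in which two or more rows come from it has two proportional rows and is also zero. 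Hence the surviving terms are exactly those with precisely one row from the rank-one matrix; such a term with $\alpha-1$ rows from $\Lambda h$ (and therefore $n-\alpha$ rows from $\frac{F}{\widetilde{F}}\widetilde{h}$) carries the factor $\Lambda^{\alpha}\,(F/\widetilde{F})^{\,n-\alpha}$, and these are precisely the terms contributing to $\delta_{\alpha}$.

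To conclude, I would expand each surviving determinant by the Leibniz rule and then sum over the row-source assignment together with the Leibniz column permutation. Re-encoding these data by a pair of permutations $\sigma_{1},\sigma_{2}\in S_{n}$ --- with $\sigma_{1}$ recording which row index carries the $F_{y}$-factor (position $n$), which $\alpha-1$ row indices carry the $h$-factors (positions $1,\dots,\alpha-1$) and which carry the $\widetilde{h}$-factors (positions $\alpha,\dots,n-1$), and $\sigma_{2}$ recording the corresponding column assignment --- turns the sum into the double permutation sum of \eqref{deltak}, while the multiplicity $(\alpha-1)!\,(n-\alpha)!$ counts the orderings of the $h$- and $\widetilde{h}$-roles that produce the same summand and so accounts for the normalizing factor. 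The $0^{+}$-homogeneity of $\delta_{\alpha}$ is then immediate, since $h_{ij}$, $\widetilde{h}_{ij}$, $F_{y^{i}}$ and $F/\widetilde{F}$ are each $0^{+}$-homogeneous in $y$. The only point requiring genuine care is this last combinatorial matching between the direct row-expansion and the symmetric double-sum form; everything else is routine multilinearity together with the identities $h_{ij}y^{j}=\widetilde{h}_{ij}y^{j}=0$.
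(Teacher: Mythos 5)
Your proof is correct and follows essentially the same route as the paper's: the same decomposition $g_{ij}=h_{ij}+F_{y^i}F_{y^j}$, the same two vanishing mechanisms (the identities $h_{ij}y^j=\widetilde{h}_{ij}y^j=0$ killing the terms with no rank-one row, and proportionality of rows killing those with two or more), and the same re-encoding of the row-source data by a pair of permutations with multiplicity $(\alpha-1)!\,(n-\alpha)!$. The only organizational difference is that you substitute the decomposition of $g_{ij}$ before expanding the determinant rather than after, which has the minor virtue of making the elimination of the terms containing two or more rank-one factors completely explicit --- a point the paper's proof passes over rather quickly.
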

\begin{proof}
From the definition of the polynomial $P$ we have 
\begin{eqnarray*}
P(\Lambda)=\det \left( \dfrac{F}{\widetilde{F}} \widetilde{h}_{ij} +  \Lambda g_{ij} \right) = \sum_{\sigma\in S_n} \varepsilon(\sigma)\left( \dfrac{F}{\widetilde{F}} \widetilde{h}_{1\sigma(1)} + \Lambda g_{1\sigma(1)}\right) \cdots \left(\dfrac{F}{\widetilde{F}}  \widetilde{h}_{n\sigma(n)} + \Lambda g_{n\sigma(n)}\right) = \sum_{\alpha=1}^{n} \delta_{\alpha} \Lambda^{\alpha}. \label{pl1}
\end{eqnarray*}
Therefore, the coefficient $\delta_{\alpha}$ collects all terms from the above sum that contain $\alpha$ factors of elements of $g_{ij}$ and $(n-\alpha)$ factors of elements of $\frac{F}{\widetilde{F}}\widetilde{h}_{ij}$:
\begin{eqnarray*}
\delta_{\alpha} & \stackrel{(a)}{=} & \dfrac{F^{n-\alpha}}{\widetilde{F}^{n-\alpha}} \sum_{\sigma \in S_n} \varepsilon(\sigma) \sum_{\substack{1\leq i_1<\cdots < i_{\alpha}\leq n \\ 1\leq j_1<\cdots < j_{n-\alpha}\leq n \\ \{i_1,...,i_{\alpha}\}\cap \{j_1,..., j_{n-\alpha}\}=\emptyset}} g_{i_1\sigma(i_1)} \cdots g_{i_{\alpha}
\sigma(i_{\alpha})} \widetilde{h}_{j_1\sigma(j_1)} \cdots \widetilde{h}_{j_{n-{\alpha}}\sigma(j_{n-{\alpha}})} \\
& \stackrel{(b)}{=} & \dfrac{F^{n-\alpha}}{\widetilde{F}^{n-\alpha}} \frac{1}{\alpha!(n-\alpha)!} \sum_{\sigma, \sigma_1 \in S_n} \varepsilon(\sigma) g_{\sigma_1(1)\sigma(\sigma_1(1))} \cdots g_{\sigma_1(\alpha)\sigma(\sigma_1(\alpha))} \widetilde{h}_{\sigma_1(\alpha+1)\sigma(\sigma_1(\alpha+1))} \cdots \widetilde{h}_{\sigma_1(n)\sigma(\sigma_1(n))} \\
 & \stackrel{(c)}{=} & \dfrac{F^{n-\alpha}}{\widetilde{F}^{n-\alpha}} \frac{1}{\alpha!(n-\alpha)!} \sum_{\sigma_1, \sigma_2 \in S_n} \varepsilon(\sigma_1 \sigma_2) g_{\sigma_1(1)\sigma_2(1)} \cdots g_{\sigma_1(\alpha)\sigma_2(\alpha)} \widetilde{h}_{\sigma_1(\alpha+1)\sigma_2(\alpha+1)} \cdots \widetilde{h}_{\sigma_1(n)\sigma_2(n)} \\
 & \stackrel{(d)}{=} & \dfrac{F^{n-\alpha}}{\widetilde{F}^{n-\alpha}} \frac{1}{(\alpha-1)!(n-\alpha)!} \sum_{\sigma_1, \sigma_2 \in S_n} \varepsilon(\sigma_1 \sigma_2) h_{\sigma_1(1)\sigma_2(1)} \cdots h_{\sigma_1(\alpha-1)\sigma_2(\alpha-1)} \\ & & \cdot \widetilde{h}_{\sigma_1(\alpha)\sigma_2(\alpha)} \cdots \widetilde{h}_{\sigma_1(n-1)\sigma_2(n-1)}  \frac{\partial F}{\partial y^{\sigma_1(n)}} \frac{\partial F}{\partial y^{\sigma_2(n)}},
\end{eqnarray*}
 which gives formula \eqref{deltak}. 
 
The equality (a) can be obtained also using Marvin Marcus' formula, \cite[(1)]{Marcus90}, for the sum of a determinant:
 \begin{eqnarray}
 \det(A+B)=\sum_{r} \sum_{a,b} (-1)^{s(a)}(-1)^{s(b)}\det\left(A[a\vert b]\right) \det\left(B[a\vert b]\right), \label{detab}
 \end{eqnarray}
 where $r$ is an integer from $0$ to $n$, and $a$ an $b$ are increasing integer sequences of length $r$ chosen from $1$ to $n$. Moreover, $A[a\vert b]$  is the $r$-square matrix obtained by taking rows $a$ and columns $b$, while $B[a\vert b]$ is the $(n-r)$-square matrix taking the complementary rows to $a$ and $b$, respectively. Also, $s(a)$ and $s(b)$ are the sums of the integers in $a$ and $b$, respectively.  

We can apply formula \eqref{detab} to compute $P(\Lambda)=\det\left( \frac{F}{\widetilde{F}}\widetilde{h}_{ij}+ \Lambda g_{ij}\right)$. As we are interested in computing the coefficient of $\Lambda^{\alpha}$, we must consider only the terms with $r=n-\alpha$, for $a= \left( i_1<\cdots < i_{\alpha} \right)$ and $b=\left( j_1<\cdots < j_{n-\alpha} \right)$. This gives the right hand side of equality $(a)$.   
 
While proving the formula for $\delta_{\alpha}$, we made use of the following arguments: 
 
 (b) The two sets $\{i_1,...,i_{\alpha}\}$ and $\{j_1,..., j_{n-\alpha}\}$ partition the set $\{1,2,...,n\}$ and therefore there is a permutation $\sigma_1 \in S_n$ such that $i_1=\sigma_1(1), ..., i_{\alpha}=\sigma_1(\alpha), j_1=\sigma_1(\alpha+1), ... , j_{n-\alpha}=\sigma_1(n)$. While $i_1<\cdots < i_{\alpha}$ and $j_1<\cdots < j_{n-\alpha}$ are in a fixed order, the values of the permutation $\sigma_1$ (grouped in two sets, one with $\alpha$ elements and one with $n-\alpha$ elements) are not ordered. Hence the factor $1/\alpha!(n-\alpha)!$ in front of the sum.
 
 (c) If we denote $\sigma_2=\sigma\sigma_1$, then, for a fixed $\sigma_1\in S_n$, $\sigma$ covers $S_n$ if and only if $\sigma_2$ covers $S_n$. Their signatures are related by $\varepsilon(\sigma) = \varepsilon(\sigma_1\sigma\sigma_1) =\varepsilon(\sigma_1\sigma_2)$.  
 
 (d) We use formula \eqref{gh} to replace the metric tensor $g_{ij}$ in terms of the angular metric $h_{ij}$, of rank $n-1$, and the components $F_{y^i}F_{y^j}$ of rank $1$. Therefore, in each product of $\alpha$ factors
\begin{eqnarray}
 \left(h_{\sigma_1(1)\sigma_2(1)} + \frac{\partial F}{\partial y^{\sigma_1(1)}} \frac{\partial F}{\partial y^{\sigma_2(1)}}\right) \cdots \left(h_{\sigma_1(\alpha)\sigma_2(\alpha)} + \frac{\partial F}{\partial y^{\sigma_1(\alpha)}} \frac{\partial F}{\partial y^{\sigma_2(\alpha)}}\right), \label{tehn1}
\end{eqnarray} 
 the components $\frac{\partial F}{\partial y^{\sigma_1(j)}} \frac{\partial F}{\partial y^{\sigma_2(j)}}$ will appear exactly once, for each $j\in \{1,...,\alpha\}$. 
 
The terms of formula \eqref{tehn1}, where the components of $F_{y^i}F_{y^j}$ do not appear, are the products $h_{\sigma_1(1)\sigma_2(1)}\cdots h_{\sigma_1(\alpha)\sigma_2(\alpha)}$. These products, in the right hand side of equality $(c)$, will come together with the products    
$\widetilde{h}_{\sigma_1(\alpha+1)\sigma_2(\alpha+1)}\cdots \widetilde{h}_{\sigma_1(n)\sigma_2(n)}$, and their corresponding sum
\begin{eqnarray*}
\sum_{\sigma_1, \sigma_2 \in S_n} \varepsilon(\sigma_1 \sigma_2) h_{\sigma_1(1)\sigma_2(1)} \cdots h_{\sigma_1(\alpha)\sigma_2(\alpha)} \widetilde{h}_{\sigma_1(\alpha+1)\sigma_2(\alpha+1)} \cdots \widetilde{h}_{\sigma_1(n)\sigma_2(n)}
\end{eqnarray*}
will vanish since it represents the coefficient of $\Lambda^{\alpha}$ of the polynomial $\det\left(\widetilde{h}_{ij}+\Lambda h_{ij}\right)=0$. Here I used that both angular metrics have the same kernel, which implies that $\left(\widetilde{h}_{ij}+\Lambda h_{ij}\right) y^j=0$ and hence $\det\left(\widetilde{h}_{ij}+\Lambda h_{ij}\right)=0$.
 
While expanding formula \eqref{tehn1}, there are $\alpha$ identical terms that contain $\frac{\partial F}{\partial y^{\sigma_1(j)}} \frac{\partial F}{\partial y^{\sigma_2(j)}}$, for each $j\in \{1,...,\alpha\}$. We group them and hence the factor in front of the sum from the right hand side of equality $(d)$ becomes $1/(\alpha-1)!(n-\alpha)!$. Then, we replace both permutations $\sigma_1$ and $\sigma_2$ with $\sigma_1 \circ \tau_{\alpha n}$ and $\sigma_2 \circ \tau_{\alpha n}$ ($\tau_{\alpha n}$ is the transposition that interchanges $\alpha$ and $n$) and obtain formula \eqref{deltak}.
   \end{proof}
 
The following lemma shows the importance of the coefficients $\delta_{\alpha}$ and $f_{\alpha}$, of the two characteristic polynomials $P$ and $Q$, for expressing the volume forms $\Omega_{\alpha}$ in a natural coordinate system or with respect to the canonical volume form.

\begin{lem} \label{omega0k}
Consider $F$ and $\widetilde{F}$ two projectively related Finsler
metrics. In terms of a natural coordinate system, the $(2n-1)$-forms
$\Omega_\alpha$ given by \eqref{omegak} can be
expressed as follows:
\begin{eqnarray}
\Omega_\alpha=(-1)^{n-1}\dfrac{\delta_{\alpha}}{F^{n}} dx\wedge
  i_{\mathcal{C}}dy, \quad \forall \alpha\in \{1,...,n\},  \label{okdxtau}
\end{eqnarray}
where $\delta_{\alpha}$ are the coefficients of the polynomial $P$  and 
$$ i_{\mathcal{C}}dy=\sum_{i=1}^n (-1)^{i-1}y^i dy^1 \wedge \cdots dy^{i-1}\wedge dy^{i+1} \wedge \cdots \wedge dy^n.$$
With respect to the volume form $\Omega_{F}$, we have:
\begin{eqnarray}
\Omega_\alpha=f_{\alpha} \Omega_{F}, \quad \forall \alpha \in \{1,...,n\},  \label{oosm}
\end{eqnarray}
where $f_{\alpha}$ are the coefficients of the polynomial $Q$.
\end{lem}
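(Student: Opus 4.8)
The plan is to prove \eqref{okdxtau} by a direct computation in a fixed natural chart, turning the wedge products in \eqref{omegak} into a single antisymmetrised ``mixed determinant'' sum that is recognised, through formula \eqref{deltak}, as $\delta_\alpha$; formula \eqref{oosm} is then immediate. First I would record the ingredients. By \eqref{ddjf}, $dd_JF=\tfrac1F h_{ij}\,\delta y^i\wedge dx^j$ and $dd_J\widetilde F=\tfrac1{\widetilde F}\widetilde h_{ij}\,\delta y^i\wedge dx^j$. Since $F$ is $1^+$-homogeneous, Euler's identity gives $y^iF_{y^i}=F$, and from $FF_{y^i}=g_{ij}y^j$ one gets $h_{ij}y^j=g_{ij}y^j-F_{y^i}F_{y^j}y^j=0$ and likewise $\widetilde h_{ij}y^j=0$. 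Introduce the $2$-form $\omega_f:=F_{y^i}F_{y^j}\,\delta y^i\wedge dx^j$; using $i_{\mathcal C}\delta y^i=y^i$ and $i_{\mathcal C}dx^j=0$ one checks $i_{\mathcal C}\omega_f=(F_{y^i}y^i)F_{y^j}dx^j=F\,d_JF$ and $i_{\mathcal C}(dd_JF)=i_{\mathcal C}(dd_J\widetilde F)=0$. As $\omega_f$ has even degree and $i_{\mathcal C}$ of a wedge of $2$-forms each annihilated by $i_{\mathcal C}$ vanishes, the Leibniz rule collapses $i_{\mathcal C}\big(\omega_f\wedge(dd_JF)^{\alpha-1}\wedge(dd_J\widetilde F)^{n-\alpha}\big)=F\,d_JF\wedge(dd_JF)^{\alpha-1}\wedge(dd_J\widetilde F)^{n-\alpha}$, so it suffices to compute the $2n$-form $\omega_f\wedge(dd_JF)^{\alpha-1}\wedge(dd_J\widetilde F)^{n-\alpha}$ and then contract with $\mathcal C$.

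That $2n$-form is a wedge of $n$ two-forms of the shape $M_{ij}\,\delta y^i\wedge dx^j$, with $M$ equal to $F_yF_y^{T}$ (once), to $\tfrac1F h$ ($\alpha-1$ times), or to $\tfrac1{\widetilde F}\widetilde h$ ($n-\alpha$ times). Any nonvanishing term of the expansion contains the full factor $dx^1\wedge\dots\wedge dx^n$, so the correction terms $\tfrac{\partial G^i}{\partial y^j}dx^j$ in $\delta y^i$ drop and $\delta y^i$ may be replaced by $dy^i$. The elementary identity
\[
\big(M^{(1)}_{ij}dy^i\wedge dx^j\big)\wedge\dots\wedge\big(M^{(n)}_{ij}dy^i\wedge dx^j\big)=(-1)^{n(n-1)/2}\Big(\sum_{\sigma,\tau\in S_n}\varepsilon(\sigma)\varepsilon(\tau)\prod_{k=1}^{n}M^{(k)}_{\sigma(k)\tau(k)}\Big)\,dy^1\wedge\dots\wedge dy^n\wedge dx^1\wedge\dots\wedge dx^n,
\]
obtained by moving all $dy$'s to the left ($n(n-1)/2$ transpositions) and using the Leibniz expansion of a determinant twice, then produces, after extracting $\tfrac1{F^{\alpha-1}\widetilde F^{n-\alpha}}$, exactly the antisymmetric sum appearing in \eqref{deltak}: its summand, with one block $F_yF_y^{T}$, $\alpha-1$ blocks $h$ and $n-\alpha$ blocks $\widetilde h$, is unchanged by simultaneously permuting the $n$ slots of $(M^{(1)},\dots,M^{(n)})$, so the slot order is irrelevant. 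Matching with \eqref{deltak} and collecting powers of $F,\widetilde F$ gives
\[
\omega_f\wedge(dd_JF)^{\alpha-1}\wedge(dd_J\widetilde F)^{n-\alpha}=(-1)^{n(n-1)/2}(\alpha-1)!\,(n-\alpha)!\,\frac{\delta_\alpha}{F^{\,n-1}}\;dy^1\wedge\dots\wedge dy^n\wedge dx^1\wedge\dots\wedge dx^n.
\]

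Contracting with $\mathcal C$ and using $i_{\mathcal C}dx^j=0$, $i_{\mathcal C}(dy^1\wedge\dots\wedge dy^n)=i_{\mathcal C}dy$, one gets $i_{\mathcal C}(dy^1\wedge\dots\wedge dy^n\wedge dx^1\wedge\dots\wedge dx^n)=(i_{\mathcal C}dy)\wedge dx=dx\wedge i_{\mathcal C}dy$ (the reordering sign $(-1)^{(n-1)n}=+1$). Dividing by $F$, substituting into \eqref{omegak}, and using $(-1)^{(n-1)(n-2)/2}(-1)^{n(n-1)/2}=(-1)^{(n-1)^2}=(-1)^{n-1}$ together with $\tfrac1F\cdot\tfrac1{F^{\,n-1}}=\tfrac1{F^n}$ yields \eqref{okdxtau}. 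For \eqref{oosm}, take $\alpha=n$ in \eqref{okdxtau}: since $\delta_n=\det g$ we get $\Omega_F=\Omega_n=(-1)^{n-1}\tfrac{\det g}{F^n}\,dx\wedge i_{\mathcal C}dy$, which is nowhere zero (as is already used when calling $\Omega_F$ a volume form), so dividing \eqref{okdxtau} by it gives $\Omega_\alpha=\tfrac{\delta_\alpha}{\det g}\,\Omega_F=f_\alpha\Omega_F$.

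The step I expect to be the crux is the identification of the wedge-expansion sum with \eqref{deltak}: one must line up the position of the rank-one block $F_yF_y^{T}$ and account for the multiplicities $(\alpha-1)!\,(n-\alpha)!$ arising from the repeated blocks. This is bookkeeping rather than a genuine obstacle once one invokes the slot-permutation symmetry of $\sum_{\sigma,\tau}\varepsilon(\sigma)\varepsilon(\tau)\prod_k M^{(k)}_{\sigma(k)\tau(k)}$ — the same symmetry already exploited in the proof of Lemma \ref{lemdeltak}. Sign tracking is the other delicate point, but the identities $i_{\mathcal C}(dy^1\wedge\dots\wedge dy^n\wedge dx^1\wedge\dots\wedge dx^n)=dx\wedge i_{\mathcal C}dy$ and $(-1)^{(n-1)(n-2)/2}(-1)^{n(n-1)/2}=(-1)^{n-1}$ make it self-checking.
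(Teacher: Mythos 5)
Your proof is correct and follows essentially the same route as the paper: the paper forms the $2n$-form $\Omega'_\alpha=\frac{1}{F}dF\wedge\Omega_\alpha=\frac{1}{F}d_vF\wedge d_JF\wedge(dd_JF)^{(\alpha-1)}\wedge(dd_J\widetilde F)^{(n-\alpha)}$ — whose leading block $\frac{1}{F}F_{y^i}F_{y^j}\,\delta y^i\wedge dx^j$ is exactly your $\frac{1}{F}\omega_f$ — identifies its coefficient with $\delta_\alpha$ via \eqref{deltak}, and recovers $\Omega_\alpha=i_{\mathcal C}\Omega'_\alpha$, just as you do. Your write-up merely makes explicit the combinatorial matching with \eqref{deltak} and the sign bookkeeping that the paper leaves implicit.
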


\begin{proof}
Since $S$ is the geodesic spray of the Finsler metric $F$, it follows that $d_hF=0$, and therefore we have:
\begin{eqnarray*}
\dfrac{1}{F}dF=\dfrac{1}{F}d_vF=\dfrac{1}{F}\dfrac{\partial F}{\partial y^i}\delta y^i. \label{dvf}
\end{eqnarray*}
Together with the $(2n-1)-$forms \eqref{omegak}, we will consider the following $2n-$forms, which are volume forms on $T_0M$ for each $\alpha \in \{1,..,n\}$:
\begin{eqnarray*}
\nonumber \Omega'_{\alpha}  & = & \dfrac{1}{F}dF \wedge \Omega_{\alpha} = \dfrac{(-1)^{(n-1)(n-2)/2}}{(\alpha-1)!(n-\alpha)!}\dfrac{1}{F}d_vF \wedge d_JF \wedge \left(dd_JF\right)^{(\alpha-1)} \wedge \left(dd_J\widetilde{F}\right)^{(n-\alpha)} \\
\nonumber & =  & \dfrac{(-1)^{(n-1)(n-2)/2}}{(\alpha-1)!(n-\alpha)!} \dfrac{1}{F} \dfrac{\partial F}{\partial y^i} \delta y^i \wedge \dfrac{\partial F}{\partial y^j} dx^j \wedge \left( \dfrac{1}{F} h_{ij} \delta y^i \wedge dx^j\right)^{(\alpha -1)} \wedge \left( \dfrac{1}{\widetilde{F}} \widetilde{h}_{ij} \delta y^i \wedge dx^j\right)^{(n-\alpha)} \\
& \stackrel{\eqref{deltak}}{=} & \dfrac{(-1)^{(n-1)(n-2)/2}}{F^n} \delta_{\alpha} dy^1 \wedge dx^1 \wedge \cdots \wedge dy^n \wedge dx^n = \dfrac{-\delta_{\alpha}}{F^n} dx\wedge dy.
\end{eqnarray*}
Now, we can prove formula \eqref{okdxtau}:
\begin{eqnarray*}
\Omega_{\alpha}=i_{\mathcal{C}}\Omega'_{\alpha}=\dfrac{(-1)^{n-1}\delta_{\alpha}}{F^n}dx\wedge i_{\mathcal{C}}dy. 
\end{eqnarray*}

In order to prove formula \eqref{oosm}, we pay attention now to the volume form $\Omega_n=\Omega_{F}$. The coefficient $\delta_n$ of the polynomial $P$ is given by $\delta_n=\det g$. Therefore, for $\alpha=n$, formula \eqref{okdxtau} reads 
\begin{eqnarray}
\Omega_n=(-1)^{n-1}\frac{\delta_n}{F^n}dx\wedge i_{\mathcal{C}}dy
  = (-1)^{n-1}\frac{\det g}{F^n}dx\wedge
  i_{\mathcal{C}}dy. \label{oodxtau}
\end{eqnarray}
Now, the two formulae \eqref{okdxtau} and \eqref{oodxtau} provide the formula \eqref{oosm} that expresses the $(2n-1)$-forms $\Omega_{\alpha}$ in terms of the volume form $\Omega_n$. 
  \end{proof}
  The volume form $\Omega_n=\Omega_{F}$ has been introduced in \cite{HS05} using an orthonormal frame. In \cite[(1.5)]{HS05}, the expression of this volume form is given in terms of a natural coordinate system and does not contain the factor $(-1)^{n-1}$, which appears in formula \eqref{oodxtau}.
  
  \section{Proof of Theorem \ref{mainthm} and some particular cases}

In this section we provide the proof of the main result, Theorem \ref{mainthm}, and use it to obtain explicit formulae for the first integrals $f_{\alpha}$, for $\alpha=1$ and $\alpha=n-1$. The expression for $f_1$ has been obtained before in \cite[Lemma 3.1]{BCC21}, using different techniques. The expression for $f_{n-1}$ is new. We show that in the Riemannian case, the two integrals \eqref{io} and \eqref{in1} reduce to the two first integrals $I_0$ and $I_1$ obtained by Topalov and Matveev in \cite[Theorem 1]{TM03}. 

In the previous section, in Lemma \ref{omega0k}, we have seen that all volume forms $\Omega_{\alpha}$, given by \eqref{omegak}, can be expressed in terms of the canonical volume form  $\Omega_F=\Omega_n$, the proportionality factors being given by formula \eqref{oosm}:
\begin{eqnarray*}
\Omega_{\alpha}=f_{\alpha}\Omega_F, \quad \forall \alpha \in \{1,..,n\}, 
\end{eqnarray*}  
for $f_{\alpha}$ the coefficients of the characteristic polynomial \eqref{cpq}.
In order to complete the proof of Theorem \ref{mainthm}, we will show now that the volume forms $\Omega_{\alpha}$ are invariant by the Reeb vector field.

\begin{lem} \label{invforms}
For two projectively related Finsler metrics $F$ and $\widetilde{F}$, the $(2n-1)$-forms \eqref{omegak} are invariant by the Reeb vector field $S/F$. 
\end{lem}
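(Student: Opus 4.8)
The plan is to apply Cartan's formula on the $(2n-1)$-dimensional manifold $SM$. Since each $\Omega_\alpha$ given by \eqref{omegak} is a top-degree form on $SM$, we have $d\Omega_\alpha=0$, so Cartan's identity reduces to
\[
\mathcal{L}_{S/F}\Omega_\alpha=i_{S/F}d\Omega_\alpha+d\,i_{S/F}\Omega_\alpha=d\,i_{S/F}\Omega_\alpha.
\]
Hence it suffices to show that the $(2n-2)$-form $i_{S/F}\Omega_\alpha$ is closed on $SM$.

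First I would contract $\Omega_\alpha$ with the Reeb vector field. The $1$-form $d_JF$ is the contact form of $(SM,d_JF)$ and $S/F$ its Reeb field, so $d_JF(S/F)=1$ and $i_{S/F}dd_JF=0$, as recalled in the previous section. Moreover the Rapcs\'ak equation $(R_2)$ gives $i_Sdd_J\widetilde F=0$, hence $i_{S/F}dd_J\widetilde F=\frac1F i_Sdd_J\widetilde F=0$. Because $i_{S/F}$ is a graded derivation, it annihilates every power of $dd_JF$ and of $dd_J\widetilde F$; contracting the wedge product in \eqref{omegak} term by term therefore leaves only the contribution in which $i_{S/F}$ hits the factor $d_JF$, giving
\[
i_{S/F}\Omega_\alpha=\frac{(-1)^{(n-1)(n-2)/2}}{(\alpha-1)!(n-\alpha)!}\bigl(dd_JF\bigr)^{(\alpha-1)}\wedge\bigl(dd_J\widetilde F\bigr)^{(n-\alpha)}.
\]

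To finish, $dd_JF$ and $dd_J\widetilde F$ are exact, hence closed, so the wedge product on the right is closed, $d\,i_{S/F}\Omega_\alpha=0$, and therefore $\mathcal{L}_{S/F}\Omega_\alpha=0$ for every $\alpha\in\{1,\dots,n\}$. An essentially equivalent route avoids restricting first: on $T_0M$, the Leibniz rule together with $\mathcal{L}_Sd_JF=dF$ (the Rapcs\'ak equation $(R_1)$ applied to $F$ itself), $\mathcal{L}_Sdd_JF=d\,\mathcal{L}_Sd_JF=0$, and $\mathcal{L}_Sdd_J\widetilde F=0$ by $(R_3)$ yields $\mathcal{L}_S\Omega_\alpha$ proportional to $dF\wedge(dd_JF)^{(\alpha-1)}\wedge(dd_J\widetilde F)^{(n-\alpha)}$; since $S$ is tangent to $SM=\{F=1\}$ and $dF$ pulls back to $d1=0$ there, the restriction $\mathcal{L}_{S/F}\Omega_\alpha$ vanishes.

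I do not expect a genuine analytic obstacle; the only thing requiring care is the bookkeeping — checking that all three Hilbert forms and their products descend to $SM$, tracking the alternating signs in the graded derivation $i_{S/F}$, and recognising that the identities invoked are exactly the Reeb property of $S/F$ together with the Rapcs\'ak equations $(R_2)$ and $(R_3)$. The conceptual point is that projective equivalence is precisely what forces $dd_J\widetilde F$ to be both $S$-invariant and $i_S$-annihilated, the same two properties $dd_JF$ has automatically.
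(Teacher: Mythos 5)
Your proof is correct, but it takes a genuinely different route from the paper's. You exploit the fact that $\Omega_\alpha$ is a top-degree form on the $(2n-1)$-dimensional manifold $SM$, so that Cartan's formula reduces $\mathcal{L}_{S/F}\Omega_\alpha$ to $d\,i_{S/F}\Omega_\alpha$; the contraction is then computed from the Reeb identities $i_{S/F}d_JF=1$, $i_{S/F}dd_JF=0$ together with the Rapcs\'ak equation $(R_2)$, $i_Sdd_J\widetilde{F}=0$, and the closedness of the surviving exact factors $\left(dd_JF\right)^{(\alpha-1)}\wedge\left(dd_J\widetilde{F}\right)^{(n-\alpha)}$ finishes the argument. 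The paper instead proves the stronger statement that each factor of \eqref{omegak} is separately invariant: it rewrites $(R_1)$ as $\mathcal{L}_{S/F}d_J\widetilde{F}=d\bigl(\widetilde{F}/F\bigr)$, from which $\mathcal{L}_{S/F}dd_J\widetilde{F}=0$ and, taking $\widetilde{F}=F$, also $\mathcal{L}_{S/F}d_JF=0$ and $\mathcal{L}_{S/F}dd_JF=0$, and then concludes by the Leibniz rule --- this is essentially your secondary ``equivalent route'', except formulated directly for $S/F$ rather than for $S$ followed by restriction to $\{F=1\}$ (your version of that step is sound, since $S$ is tangent to the indicatrix and $dF$ pulls back to zero there, but it deserves the explicit remark that Lie differentiation along a tangent field commutes with the pullback). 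Your Cartan-formula argument is the more economical one, needing only the contraction identities $(R_2)$ and the Reeb property; the paper's argument, based on $(R_1)$ and $(R_3)$, yields the reusable intermediate fact that the Hilbert forms themselves are invariant under the Reeb flow. Since the Rapcs\'ak equations are all equivalent, the two proofs carry the same geometric content.
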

\begin{proof}
According to formula \eqref{omegak}, it suffices to prove that $\mathcal{L}_{S/F}d_JF=0$, $\mathcal{L}_{S/F}dd_JF=0$ and $\mathcal{L}_{S/F}dd_J\widetilde{F}=0$. 

First, we rewrite the Rapcsack equation $(R_1)$ in terms of the Reeb vector field $S/F$:
\begin{eqnarray}
\mathcal{L}_{S/F}d_J\widetilde{F}=\dfrac{1}{F}\mathcal{L}_Sd_J\widetilde{F} - \widetilde{F}\dfrac{dF}{F^2} = \dfrac{1}{F} \left(\mathcal{L}_Sd_J\widetilde{F} - d\widetilde{F} \right) +  d\left(\dfrac{\widetilde{F}}{F}\right) \stackrel{(R_1)}{=} d\left(\dfrac{\widetilde{F}}{F}\right). \label{r1'}
\end{eqnarray}
From the above formula \eqref{r1'}, we obtain
that the Hilbert forms $d_JF$, $dd_JF$ and $dd_J\widetilde{F}$ are invariant by the Reeb vector field $S/F$ and hence the volume forms \eqref{omegak} are invariant as well.
\end{proof}

Next, we use formulae \eqref{deltak} to obtain some explicit values for the first integrals $f_{\alpha}$. 

For $\alpha=1$, we have $f_1=\delta_1/\det g$. From formula \eqref{deltak}, for $\alpha=1$, we have 
 \begin{eqnarray} 
\nonumber \delta_{1}   & = & \dfrac{F^{n-1}}{\widetilde{F}^{n-1}}\frac{1}{(n-1)!}\sum_{\sigma_1, \sigma_2 \in S_n} \varepsilon(\sigma_1\sigma_2)  \prod_{i=1}^{n-1}\widetilde{h}_{\sigma_1(i)\sigma_2(i)} \frac{\partial
            F}{\partial y^{\sigma_1(n)}} \frac{\partial F}{\partial
                 y^{\sigma_2(n)}}  \\
\nonumber & = & \dfrac{F^{n-1}}{\widetilde{F}^{n-1}}\frac{1}{(n-1)!}\sum_{j=1}^n \sum_{\sigma_1, \sigma_2 \in S_n, \sigma_1(n)=j} \varepsilon(\sigma_1\sigma_2)  \prod_{k=1, k\neq j}^{n} \widetilde{h}_{k\sigma_2 \sigma_1^{-1}(k)}
\frac{\partial F}{\partial y^{j} } \frac{\partial F}{\partial y^{\sigma_2\sigma_1^{-1}(j)}}  \\
\nonumber & \stackrel{\sigma_3=\sigma_2 \sigma_1^{-1}}{=} & \dfrac{F^{n-1}}{\widetilde{F}^{n-1}}\frac{1}{(n-1)!}\sum_{j=1}^n \sum_{\sigma_1, \sigma_3 \in S_n,
      \sigma_1(n)=j} \varepsilon(\sigma_3)  \prod_{k=1, k\neq
      j}^{n}h_{k\sigma_3(k)} \frac{\partial F}{\partial y^{j} } \frac{\partial F}{\partial y^{\sigma_3(j)}}  \\
\nonumber  & = & \dfrac{F^{n-1}}{\widetilde{F}^{n-1}} \sum_{j=1}^n \sum_{\sigma_3 \in S_n} \varepsilon(\sigma_3)  \prod_{k=1, k\neq
      j}^{n} \widetilde{h}_{k\sigma_3(k)} \frac{\partial F}{\partial y^{j} } \frac{\partial F}{\partial y^{\sigma_3(j)}}  \\  
&  = & \dfrac{F^{n-1}}{\widetilde{F}^{n-1}}\det \left(\widetilde{h}_{ij}+ \frac{\partial
            F}{\partial y^i} \frac{\partial F}{\partial
            y^j}\right) = \frac{F^{n+1}}{\widetilde{F}^{n+1}}\det \widetilde{g}. \label{deltan1}
\end{eqnarray}
For the last equality above we made use of \cite[Lemma 2.2]{BCC21}. If we replace the value of $\delta_{1}$ in $f_1=\delta_1/\det g$, we obtain the first integral \eqref{io}.

If we denote: 
\begin{eqnarray*}
\mu = \left(\dfrac{\det g}{\det \widetilde{g}}\right)^{\frac{1}{n+1}},
\end{eqnarray*}
then formula \eqref{io} for the first integral $f_1$ can be written as:
\begin{eqnarray}
\dfrac{F^2}{f_1^{2/n+1}}=\mu^2 \widetilde{F}^2. \label{ri0}
\end{eqnarray}
Therefore, the right hand side of formula \eqref{ri0} is a first integral. In the Riemannian case it is quadratic in velocities and it reduces to the Painlev\'e first integral $I_0$, \cite[Remark 1]{TM03}. An equivalent expression of this first integral has been obtained by Crampin in \cite{Crampin03} as a $0^{+}$-homogeneous function,
$\kappa= \frac{F}{\mu \widetilde{F}}$.  The right hand side of formula \eqref{ri0} can be obtained also as a first integral for two projectively related Finsler metrics using \cite[Theorem 3]{Sarlet07}.

For $\alpha=n-1$, we obtain another first integral using directly the fact that $f_{\alpha}$ are the coefficients of the polynomial \eqref{cpq}:
\begin{eqnarray*}
f_{n-1}=\operatorname{Tr}\left(\mathcal{H}^i_j\right) = \dfrac{F}{\widetilde{F}}g^{ij}\left(\widetilde{g}_{ij} - \dfrac{\partial\widetilde{F}}{\partial y^i}  \dfrac{\partial \widetilde{F}}{\partial y^j} \right).
\end{eqnarray*} 
The above expression for the first integral $f_{n-1}$ can be written as:
\begin{eqnarray}
f_{n-1} \dfrac{\widetilde{F}^3\mu^3}{F} & = & \mu^3 g^{ij}\left(\widetilde{g}_{ij} \widetilde{g}_{kl} - \widetilde{g}_{ik} \widetilde{g}_{jl} \right)y^ky^l. \label{ri1}
\end{eqnarray}
It follows that the right hand side of formula \eqref{ri1} is a first integral. In the Riemannian case it is quadratic in velocities and corresponds to the the first integral $I_1$ from \cite[Theorem 1]{TM03}.

In dimension $2$, we can obtain directly the $0$-homogeneous integral $f_1$ and then show that it agrees with either one of the two formulae \eqref{io} or \eqref{in1}. Consider $F$ and $\widetilde{F}$ two geodesically equivalent Finsler metrics on a $2$-dimensional manifold. The forms \eqref{omegak} are now
\begin{eqnarray*}
\Omega_1=d_JF \wedge dd_J\widetilde{F}, \quad \Omega_2=d_JF \wedge dd_JF. \label{omega01}
\end{eqnarray*} 
For the volume form $\Omega_2$ we have:
\begin{eqnarray*}
\Omega_2 & = & \frac{\partial F}{\partial y^i} dx^i \wedge \frac{1}{F} h_{ij} \delta y^i \wedge dx^j = \frac{1}{F} \frac{\partial F}{\partial y^i} dx^i \wedge h_{ij}  dy^i \wedge dx^j \\ & \stackrel{(a)}{=} &  \frac{1}{F^2} \left( g_{2i}y^i h_{j1}dy^j - g_{1i}y^i h_{j2}dy^j \right) \wedge dx^1 \wedge dx^2 \\ & \stackrel{(b)}{=} & \frac{\det g}{F^2} \left(y^2 dy^1 - y^1 dy^2\right) \wedge dx^1 \wedge dx^2 = \frac{-\det g}{F^2}  dx^1 \wedge dx^2 \wedge \left(y^1 dy^2 - y^2 dy^1\right),  
\end{eqnarray*}
which is formula \eqref{oodxtau}, for $n=2$. In the previous calculations we used:

(a) $\frac{\partial F}{\partial y^i} = \frac{1}{F}g_{ij}y^j$;

(b) the expression \eqref{gh} of the angular metric $h_{ij}$ in terms of the metric $g_{ij}$.

 For the volume form $\Omega_1$ we have:
\begin{eqnarray*}
\Omega_1 & = & \frac{\partial F}{\partial y^i} dx^i \wedge \frac{1}{\widetilde{F}} \widetilde{h}_{ij} \delta y^i \wedge dx^j = \frac{1}{\widetilde{F}} \frac{\partial F}{\partial y^i} dx^i \wedge \widetilde{h}_{ij}  dy^i \wedge dx^j \\  & = &  \frac{1}{\widetilde{F}} \left( \frac{\partial F}{\partial y^2} \widetilde{h}_{11} - \frac{\partial F}{\partial y^1} \widetilde{h}_{12} \right) dx^1 \wedge dx^2 \wedge dy^1 + \frac{1}{\widetilde{F}} \left( \frac{\partial F}{\partial y^2} \widetilde{h}_{21} - \frac{\partial F}{\partial y^1} \widetilde{h}_{22} \right) dx^1 \wedge dx^2 \wedge dy^2 \\ & \stackrel{(c)}{=} & \dfrac{-\delta_1}{F^2}  dx^1 \wedge dx^2 \wedge \left(y^1 dy^2 - y^2 dy^1\right).  
\end{eqnarray*}
In the equality $(c)$ above, we introduce the function $\delta_1$ to show that for $n=2$ we recover also formula \eqref{okdxtau}. In order to obtain the function $\delta_1$, we identify the terms in both sides of equality (c). We obtain:
\begin{eqnarray*}
y^2 \dfrac{\delta_1}{F^2} & = & \frac{1}{\widetilde{F}} \left( \frac{\partial F}{\partial y^2} \widetilde{h}_{11} - \frac{\partial F}{\partial y^1} \widetilde{h}_{12} \right) \\
-y^1 \dfrac{\delta_1}{F^2} & = & \frac{1}{\widetilde{F}} \left( \frac{\partial F}{\partial y^2} \widetilde{h}_{21} - \frac{\partial F}{\partial y^1} \widetilde{h}_{22} \right).
\end{eqnarray*}
We multiply the first equation above by ${\partial F}/{\partial y^2}$, the second one by $-{\partial F}/{\partial y^1}$, we sum these expressions and obtain:
\begin{eqnarray*}
\delta_1 \dfrac{\widetilde{F}}{F} = - \begin{vmatrix}
\widetilde{h}_{11} & \widetilde{h}_{12} & \dfrac{\partial F}{\partial y^1} \\
\widetilde{h}_{21} & \widetilde{h}_{22} & \dfrac{\partial F}{\partial y^2} \\
\dfrac{\partial F}{\partial y^1} & \dfrac{\partial F}{\partial y^2} & 0
\end{vmatrix} \stackrel{(d)}{=} \dfrac{F^2}{\widetilde{F}^2} \det \widetilde{g}, \quad \delta_1= \dfrac{F^3}{\widetilde{F}^3} \det \widetilde{g}.
\end{eqnarray*}
For the equality $(d)$ above we did use \cite[Lemma 2.2]{BCC21}. The above expression for $\delta_1$ agrees, for $n=2$, with the expression \eqref{deltan1}.

With the value of the function $\delta_1$, we obtain 
\begin{eqnarray*}
\Omega_1 = \frac{-F}{\widetilde{F}^3} \det \widetilde{g} \ dx^1 \wedge dx^2 \wedge \left(y^1 dy^2 - y^2 dy^1\right).
\end{eqnarray*}
By comparing the expressions for the volume forms $\Omega_1$ and $\Omega_2$, we obtain
\begin{eqnarray*}
\Omega_{1}=f_1 \Omega_2, \quad f_1 = \frac{F^{3}}{\widetilde{F}^{3}}\frac{\det \widetilde{g}}{\det g},  
\end{eqnarray*}
which agrees with formula \eqref{io}. We will show that in the $2$-dimensional case, this formula agrees also with \eqref{in1}.
From \eqref{in1}, for $n=2$, we have
\begin{eqnarray*}
f_1 & = & \dfrac{F}{\widetilde{F}}\left(g^{11}\widetilde{h}_{11} + g^{12}\widetilde{h}_{12} + g^{21}\widetilde{h}_{21}+ g^{22}\widetilde{h}_{22} \right) \\ & = & \dfrac{F}{\widetilde{F}} \dfrac{1}{\det g}\left(g_{22}\widetilde{h}_{11} - g_{12}\widetilde{h}_{21} - g_{21}\widetilde{h}_{12}+ g_{11}\widetilde{h}_{22} \right) = \dfrac{\delta_1}{\det g} = \dfrac{F^3}{\widetilde{F}^3} \dfrac{\det \widetilde{g}}{\det g}.
\end{eqnarray*}

In the $2$-dimensional case, the proportionality factor and hence the first integral $f_1$,  can be obtained using a different argument. The angular metrics  $\widetilde{h}_{ij}$ and $h_{ij}$ have rank $1$ and they are proportional, $\frac{1}{\widetilde{F}}\widetilde{h}_{ij}=f_1 \frac{1}{F}h_{ij}$. It follows the proportionality of the Hilbert $2$-forms $ dd_J \widetilde{F} = f_1 dd_JF$ and hence the proportionality of the induced volume forms on $SM$, $\Omega_{1}=f_1 \Omega_2$.   

\subsection*{Acknowledgements} I am grateful to Vladimir Matveev for very useful discussions regarding the Riemannian correspondents of the first integrals \eqref{io} and \eqref{in1}. I express my thanks to the referees for their comments and suggestions that improved the proofs of the results.

\end{document}